\theoremstyle{plain}
\newtheorem{theorem}{Theorem}[section]
\newtheorem{lemma}[theorem]{Lemma}
\newtheorem{proposition}[theorem]{Proposition}
\theoremstyle{proof}
\theoremstyle{definition}
\theoremstyle{remark}
\theoremstyle{lamma}
\theoremstyle{thmrm}
\begin{document}
\title[Class numbers of cyclotomic fields]{On the plus parts of the class numbers of cyclotomic fields}
\author{Kalyan Chakraborty and Azizul Hoque}
\address{Kalyan Chakraborty @Harish-Chandra Research Institute, A CI of Homi Bhabha National Institute, Chhatnag Road, Jhunsi- 211019, Allahabad, India.}
\email{kalychak@gmail.com}

\address{Azizul Hoque @Department of Mathematics, Faculty of Science, Rangapara College, Rangapara, Sonitpur-784505, Assam, India.}
\email{ ahoque.ms@gmail.com}

\keywords{Class numbers, Maximal real subfield of cyclotomic fields, Real quadratic fields}
\subjclass[2010] {Primary: 11R29, 11R18, Secondary: 11R80}
\maketitle

\begin{abstract}
We exhibit some new families of cyclotomic fields which have non-trivial plus parts of their class numbers. We also prove the $3$ - divisibility of the plus part of the class number of another family consisting of infinitely many cyclotomic fields.  At the end, we provide some numerical examples supporting our results.
\end{abstract}

\section{Introduction}
Let $\zeta_m$ be a primitive $m$-th root of unity for a positive integer $m$, then the field $K^+_m=\mathbb{Q}(\zeta_m + \zeta^{-1} _m)$ is the maximal real subfield of the cyclotomic field $K_m=\mathbb{Q}(\zeta_m)$. Let $\mathcal{H}^+(m)$ denote the class-number of  $K^+_m$ and $h(m)$ be that of $k_m= \mathbb{Q}(\sqrt{m})$. The class number $\mathcal{H}(m)$ of $K_m$ can be written as $\mathcal{H}(m)=\mathcal{H}^+(m)\mathcal{H}^-(m)$. The factor $\mathcal{H}^-(m)$, so called relative class number, is well understood and is usually rather large. This factor can be determined in terms of Bernoulli numbers using the complex analytic class number formula (for details see, pp. 79-84 in \cite{LA78}). Earlier in 1850,
E. E. Kummer \cite{KU50, KU51} computed $\mathcal{H}^-(m)$ for all primes $m$ upto $97$. One can find $\mathcal{H}^-(m)$ from the tables given in \cite{SC64} by G. Schrutka von Rechtenstamm for any positive integer $m$ satisfying $\phi (m)<256$, where $\phi$ stands for Euler's phi function,  In 1998, R. Schoof \cite{SC98} computed $\mathcal{H}^-(m)$ for all odd primes $m<509$ and in fact, he also gave  the structure of the corresponding class groups. 

On the other hand, the factor $\mathcal{H}^+(m)$ is not well understood and is notoriously hard to compute explicitly. In this case, the complex analytic class number formula is not so useful, since it appeals that the units of $K^+_m$ to be known. Till the date, there is no useful method to compute $\mathcal{H}^+(m)$, not even for relatively small $m$. The number $\mathcal{H}^+(m)$ is known only for all primes up to $151$ and extended up to $241$ under the assumption of GRH (generalized Riemann hypothesis). More precisely, J. C. Miller \cite{MI15} proved that $\mathcal{H}^+(p)=1$ for all prime $p\leq 151$ unconditionally. In the same paper, he also proved that $\mathcal{H}^+(p)=1$ for all primes $p\leq 241$ except $p=163, 181, 229$ for which $\mathcal{H}^+(p)$ is $4, 11, 3,$ respective (again assuming GRH). For these primes, the Kummer--Vandiver conjecture, which says that $m$ does not divide $\mathcal{H}^+(m)$ if $m$ is a prime, holds. Recently, J. P. Buhler and D. Harvey \cite{BH11} confirmed this conjecture for all primes less than 163577856. 

On the other hand, N. C. Ankeny, S. Chowla and H. Hasse \cite{ACH65} proved that $\mathcal{H}^+(p)>1$ if $p=(2n q)^2+1$ is a prime, where $q$ is a prime and $n>1$ is an integer. Subsequently, S. D. Lang \cite{LA77} proved that $\mathcal{H}^+(p)>1$ for any prime of the form $p=\lbrace (2n+1)q \rbrace ^2 + 4$, where $q$ is a prime and $n\geq 1$ is an integer. In 1987, H. Osada \cite{OS87} generalized both the results. More precisely, he proved that $\mathcal{H}^+(m)>1$ if $m=(2n q)^2 +1$ is a square-free integer, where $q$ is a prime and $n$ is a positive integer such that $n\neq 1, q$. In the same paper, he also proved if $m=\lbrace (2n+1)q\rbrace ^2 +4$ is a square-free integer, where $q$ is a prime and $n$ is a positive integer such that $n\neq q$, then $\mathcal{H}^+(m)>1$. All these results have been obtained in the case $m\equiv 1 \pmod 4$.

Furthermore, H. Takeuchi \cite{TA81} established  similar results for certain primes of the form $p\equiv 3 \pmod 4$. He proved that if both $12m+7$ and $p=\{3(8m+5)\}^2-2$ are primes, where $m\geq 0$ is an integer, then $\mathcal{H}^+(4p)>1$. In the same paper, he also proved that if both $12m+11$ and $p=\{3(8m+7)\}^2-2$ are primes, where $m\geq 0$ is an integer, then $\mathcal{H}^+(4p)>1$. Recently, A. Hoque and H. K. Saikia \cite{HS16} generalized both the results. More precisely, they proved that if $m=\{3(8g+5)\}^2-2$ is a square-free integer, where $g$ is a positive integer, then $\mathcal{H}^+(4m)>1$. In the same paper, they also obtained a similar result for any square-free integer $m=\{ 3(8g+7)\}^2-2$, where $g$ is a positive integer. Along the same line, we \cite{HC17} produced some interesting families of cyclotomic fields whose maximal real subfields have class numbers bigger than one. All these results have been obtained in case when $m\equiv 3 \pmod 4$. Thus it would be interesting to try exhibiting similar families depending on $m$ where $m \equiv 2 \pmod 4$. Here we find some families of cyclotomic fields $K_{4m}$ whose maximal real subfields $K^+_{4m}$ have non-trivial class number when $m\equiv 2 \pmod 4$.

We discuss the results in three sections. In \S 2, we produce some families of cyclotomic fields $K_{4m}$ with maximal real subfields $K^+_{4m}$ having non-trivial class numbers whenever $m\equiv 2 \pmod 4$. In \S 3, we discuss the divisibility of the class numbers of maximal real subfields of a class of cyclotomic fields. More precisely, we produce a family of cyclotomic fields whose maximal real subfields have class numbers a multiple of $3$. 
In the concluding section, we provide some numerical evidence of our results.
We have used PARI 2.9.1 \cite{PA} for these computations.    

\section{ Non-triviality of $\mathcal{H}^+(m)$}
We prove some results concerning the non-triviality of  class numbers of certain maximal real subfields of cyclotomic fields. The proofs use elementary techniques  on dealing with solutions of Diophantine equations, and basic properties of quadratic and cyclotomic fields. We begin with a family of real quadratic fields and show that they have non-trivial class numbers.
\begin{proposition}\label{proposition3.1}
Let $m=\{14(2n+1)\}^2+2$ be square-free with $n$ a positive integer. Then $h(m)>1$. 
\end{proposition}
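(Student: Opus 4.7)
The plan is to exhibit a prime ideal of $\mathcal O_{k_m} = \mathbb{Z}[\sqrt m]$ of small norm and show that it cannot be principal. Set $a = 14(2n+1)$, so $m = a^2 + 2$ with $7 \mid a$. Then $m \equiv 2 \pmod 7$, and since $2 \equiv 3^2 \pmod 7$, the rational prime $7$ splits in $\mathcal O_{k_m}$ as $(7) = \mathfrak p \bar{\mathfrak p}$ with $\mathfrak p = (7,\, \sqrt m - 3)$ and $N(\mathfrak p) = 7$. If one had $h(m) = 1$, then $\mathfrak p$ would be principal, generated by some $x + y\sqrt m$ with $x, y \in \mathbb Z$, and taking norms would give
\begin{equation*}
x^{2} - m y^{2} = \pm 7.
\end{equation*}
Hence the proof reduces to showing that this Pell-type equation has no integer solution.

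To treat the equation I would compute the continued-fraction expansion of $\sqrt m$. For $m = a^2 + 2$, the standard recursion yields $\sqrt m = [\,a;\,\overline{a,\,2a}\,]$, a tail of period $2$ whose complete-quotient denominators $Q_i$ alternate between $1$ and $2$; correspondingly, the fundamental unit of $\mathbb Z[\sqrt m]$ is $\varepsilon = (a^2 + 1) + a\sqrt m$ of norm $+1$. By the classical theorem that every integer solution of $x^{2} - m y^{2} = N$ with $|N| < \sqrt m$ arises from a convergent of $\sqrt m$ (whence $|N|$ equals one of the $Q_i$'s), any solution would force $|N| \in \{1, 2\}$. Since $n \ge 1$ gives $a \ge 42$ and hence $\sqrt m > 7$, the value $|N| = 7$ is ruled out, contradicting the existence of $x + y\sqrt m$. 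Therefore $\mathfrak p$ is non-principal and $h(m) > 1$.

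The main obstacle, in my view, is disposing of the case $N = -7$. The case $N = +7$ admits an entirely elementary treatment: since $a$ is even, one has $m \equiv 6 \pmod 8$, and the set of residues of $x^{2} - 6 y^{2}$ modulo $8$ is $\{0, 1, 2, 3, 4, 6\}$, which excludes $7$. The case $N = -7$, by contrast, is not killed by any small-modulus congruence, and genuinely needs the global information encoded in the continued fraction (or, equivalently, the exact form of the fundamental unit). An alternative, more hands-on route is to bound $|y|$ by a reduction through powers of $\varepsilon$ and then solve the resulting factorization $(x - k a)(x + k a) = 2 k^{2} + 7$ for finitely many small $y = k$, showing that no such factorization produces $a$ that is simultaneously a positive integer, a multiple of $14$, and at least $42$.
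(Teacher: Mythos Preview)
Your proof is correct and takes a different route from the paper's. Both begin identically: $m\equiv 2\pmod 7$, so $7$ splits, and $h(m)=1$ would yield $x^2-my^2=\pm 7$ for some integers $x,y$. The paper then handles the two signs by separate ad hoc arguments: for $+7$ it reduces modulo the odd part $t=m/2=392n^{2}+392n+99$ and computes $\bigl(\tfrac{7}{t}\bigr)=-1$ via quadratic reciprocity; for $-7$ it carries out exactly the unit-reduction you sketch as your ``alternative, more hands-on route'', multiplying a minimal solution by $\varepsilon=(a^{2}+1)+a\sqrt m$, using minimality of $b_{0}$ to force $b_{0}=1$, and excluding this modulo $4$. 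Your main argument is more unified: the continued fraction $\sqrt m=[a;\overline{a,2a}]$ has $Q_{i}\in\{1,2\}$, and the classical theorem on primitive representations with $|N|<\sqrt m$ disposes of both signs at once. This is conceptually cleaner, at the cost of invoking a standard but nontrivial result, whereas the paper stays entirely elementary. Your mod-$8$ treatment of $+7$ is also simpler than the paper's reciprocity computation. (Small slip in your alternative sketch: for $N=-7$ the factorization should read $(x-ka)(x+ka)=2k^{2}-7$, not $2k^{2}+7$.)
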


\begin{proof}
We observe that
$$
m=\{14(2n+1)\}^2+2\equiv 2\pmod 7.
$$
Therefore the residue symbol, $\big( \frac{m}{7} \big) = \big( \frac{2}{7} \big)=1$. Thus 
$7$ splits completely in $k_m=\mathbb{Q}(\sqrt{m})$ as a product of a prime ideal $\mathfrak{A}\subset \mathcal{O}_{k_m}$ and its conjugate $\mathfrak{A}'$ with absolute norm $N_{k_m}(\mathfrak{A}) =7$. 

Let us assume that $h(m)=1$. Then $\mathfrak{A}$ is principal and thus, since $m\equiv 2\pmod 4$, we can write 
$$
\mathfrak{A}=(a+b\sqrt{m})
$$
with $a, b\in \mathbb{Z}$. 
Therefore, we have (using $|N_{k_m}(\mathfrak{A})|=7$),
\begin{equation*}
a^2-mb^2=\pm 7.
\end{equation*}
We prove that such a Diophantine equation doesn't have any rational integer solution.
Let us consider
\begin{equation}\label{eq3.1}
a^2-mb^2=7,
\end{equation}
and let $t=392n^2+392n+99$. Then $t\equiv 1\pmod 7$ and $m=2t$. Thus (\ref{eq3.1}) implies that
\begin{equation}\label{eq3.2}
a^2\equiv 7\pmod t.
\end{equation}
But, quadratic reciprocity law gives, 
$$
\bigg( \frac{7}{t}\bigg)=(-1)^{3(196n^2+196n+49)}\bigg(\frac{1}{7}\bigg)=-1
$$
which contradicts (\ref{eq3.2}). 

We now look at the other case, i.e.
\begin{equation*}\label{eq3.3}
a^2-mb^2=-7.
\end{equation*}
Let $a_0$ be any integer and $b_0$ be the least positive integer such that 
\begin{equation}\label{eq3.3}
a_0^2-mb_0^2=- 7.
\end{equation}
Writing (\ref{eq3.3}) in norm form, we have
\begin{equation}\label{eq3.4}
N_{k_m}(\pm |a_0|+b_0\sqrt{m})=-7.
\end{equation}
We now multiply (\ref{eq3.4}) with the norm of the unit, 
$$
\epsilon_m=\mp (s^2+1)+s\sqrt{m},
$$
where $s=14(2n+1)$, in the field $k_m$. Then we get 
\begin{equation*}
(-|a_0|(s^2+1)+b_0sm)^2-(\pm b_0(s^2+1)\mp |a_0|s)^2m=-7.
\end{equation*}
Using now the minimality of $b_0$, we can write,
\begin{equation*}
|b_0(s^2+1)-|a_0|s|\geq b_0.
\end{equation*}
If $b_0(s^2+1)-|a_0|s\geq b_0$, then $|a_0|\leq b_0s$, and therefore (\ref{eq3.3}) gives that $2b_0^2<7$ and that would imply that $b_0=1$. 

Now if $b_0=1$, the relation (\ref{eq3.3}) implies that
$a_0^2\equiv 3 \pmod 4$. This is an absurd.

In the other case; $|a_0|s-b_0(s^2+1)\geq b_0$ would give $|a_0|s > b_0m$ and then (\ref{eq3.3}) leads to $2mb_0^2<-7s^2$. This is not possible as $m>0$.
\end{proof}
The following Lemma which can be derived from a result (main theorem, \cite{OS89}) of H. Osada, will be of our use.
\begin{lemma}\label{lemma1}
Let $m$ be a square-free positive integer. Then the ideal class group of $K^+_{\sigma_m^2 m}$ has a subgroup which is isomorphic to $\mathcal{C}(k_m)^2$, where $\mathcal{C}(k_m)$ is the ideal class group of $k_m=\mathbb{Q}(\sqrt{m})$, and 
\begin{equation}\nonumber
\sigma_m=
\begin{cases}
1 \text{ if } m\equiv 1\pmod4,\\
2 \textit{ if } m\equiv 2, 3\pmod 4.
\end{cases}
\end{equation}
\end{lemma}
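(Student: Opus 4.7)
The plan is to deduce the lemma as a direct corollary of Osada's main theorem in \cite{OS89}. The substantive work has already been done there; what remains is to verify that our hypotheses fit into Osada's framework, which essentially amounts to checking that $k_m$ sits inside $K^+_{\sigma_m^2 m}$.

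First I would verify the containment $k_m \subset K^+_{\sigma_m^2 m}$ by a conductor computation. For square-free $m$, the conductor of $k_m=\mathbb{Q}(\sqrt{m})$ is $m$ when $m\equiv 1\pmod 4$ and $4m$ when $m\equiv 2,3\pmod 4$, which in both cases is exactly $\sigma_m^2 m$. By the Kronecker--Weber theorem we get $k_m\subset K_{\sigma_m^2 m}$, and since $k_m$ is totally real, in fact $k_m\subset K^+_{\sigma_m^2 m}$.

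Next I would invoke Osada's main theorem to extract the embedding $\mathcal{C}(k_m)\oplus \mathcal{C}(k_m)\hookrightarrow \mathcal{C}(K^+_{\sigma_m^2 m})$. The embedding is constructed by lifting ideals from $k_m$ to $K^+_{\sigma_m^2 m}$ and using the Galois action on the extension $K^+_{\sigma_m^2 m}/k_m$ to produce two independent copies of $\mathcal{C}(k_m)$; the square-freeness hypothesis on $m$ is precisely what controls the ramification in this extension and guarantees that the two liftings remain independent and injective modulo principal ideals.

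The main obstacle, already overcome in Osada's original paper, is to show both that the lifting map is injective (a non-principal ideal of $k_m$ cannot become principal after extension of scalars) and that the two copies intersect trivially in $\mathcal{C}(K^+_{\sigma_m^2 m})$. Once Osada's theorem is granted, the lemma follows by matching his hypotheses with our setting via the conductor computation above. Since this paper uses the lemma only as a black box to produce elements of $\mathcal{H}^+(\sigma_m^2 m)$ out of non-trivial classes in $\mathcal{C}(k_m)$, citing \cite{OS89} and spelling out the containment is the most economical route.
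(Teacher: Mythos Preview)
Your approach matches the paper's: the authors give no proof of the lemma beyond the sentence ``The following Lemma which can be derived from a result (main theorem, \cite{OS89}) of H.~Osada, will be of our use,'' and you likewise reduce everything to Osada's theorem, additionally spelling out the conductor computation that places $k_m$ inside $K^+_{\sigma_m^2 m}$. There is nothing further to compare.
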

We thus obtain the first main result by applying together Proposition \ref{proposition3.1} and Lemma \ref{lemma1}.
\begin{theorem}\label{theorem3.1}
Let $m=\{14(2n+1)\}^2+2$ be square-free with $n$ a positive integer. Then $\mathcal{H}^+(4m)>1$.
\end{theorem}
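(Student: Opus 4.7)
The plan is to deduce Theorem \ref{theorem3.1} essentially as a direct corollary of Proposition \ref{proposition3.1} combined with Lemma \ref{lemma1}. All the non-trivial work has already been carried out: Proposition \ref{proposition3.1} gives the non-triviality of $h(m)$ via the Diophantine/reciprocity argument, and Lemma \ref{lemma1} is the bridge from the class group of the real quadratic field $k_m$ to the class group of the real cyclotomic field $K^+_{\sigma_m^2 m}$.

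The first step is to check the congruence condition needed to pin down $\sigma_m$ in Lemma \ref{lemma1}. Writing $14(2n+1) = 2\cdot 7(2n+1)$, its square is $4\cdot 49(2n+1)^2 \equiv 0 \pmod 4$, so
\begin{equation*}
m = \{14(2n+1)\}^2 + 2 \equiv 2 \pmod 4.
\end{equation*}
By the definition of $\sigma_m$ in Lemma \ref{lemma1}, we therefore have $\sigma_m = 2$, and hence $\sigma_m^2 m = 4m$. This identifies the cyclotomic field appearing in Lemma \ref{lemma1} as exactly $K^+_{4m}$.

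Next, I would apply Lemma \ref{lemma1} to conclude that the ideal class group of $K^+_{4m}$ contains a subgroup isomorphic to $\mathcal{C}(k_m)^2$. Since Proposition \ref{proposition3.1} asserts $h(m) = |\mathcal{C}(k_m)| > 1$, the group $\mathcal{C}(k_m)^2$ is non-trivial, and therefore so is the class group of $K^+_{4m}$. In particular $\mathcal{H}^+(4m) > 1$, which is the required conclusion.

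There is essentially no obstacle at this stage: the real obstacles, namely producing the prime ideal $\mathfrak{A}$ of norm $7$ above a split prime and excluding the equations $a^2 - mb^2 = \pm 7$ via quadratic reciprocity and the norm-form minimality argument, were dealt with inside Proposition \ref{proposition3.1}. The only care needed here is the congruence check ensuring $\sigma_m = 2$, so that the subfield produced by Osada's construction is precisely $K^+_{4m}$ and not some other conductor; this is a one-line verification, as noted above.
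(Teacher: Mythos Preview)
Your proposal is correct and matches the paper's own proof exactly: the paper simply states that Theorem \ref{theorem3.1} follows by applying Proposition \ref{proposition3.1} and Lemma \ref{lemma1} together, and you have spelled out precisely that deduction, including the verification that $m\equiv 2\pmod 4$ so that $\sigma_m=2$ and $\sigma_m^2 m=4m$.
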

We exhibit another family of cyclotomic fields with  non-trivial plus parts in their class groups.
\begin{proposition}\label{proposition3.2}
Let  $m=(3(2n+1))^2+1$ be a square-free integer with $n\geq 1$. Then $h(m)>1$. 
\end{proposition}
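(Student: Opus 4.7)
The plan is to follow the same blueprint as the proof of Proposition~\ref{proposition3.1}: locate a small rational prime $p$ that splits in $k_m=\mathbb{Q}(\sqrt{m})$, use the hypothetical principality of a prime above $p$ to extract a Diophantine equation of the form $a^2-mb^2=\pm p$, and then rule it out by congruence arguments on $a,b$.

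Setting $s=3(2n+1)$ so that $m=s^2+1$, I would take $p=3$. Indeed $m\equiv 1\pmod 3$, so $\left(\tfrac{m}{3}\right)=1$ and $3$ splits in $k_m$ as $\mathfrak{A}\mathfrak{A}'$ with $N_{k_m}(\mathfrak{A})=3$. Moreover $s$ is odd, hence $s^2\equiv 1\pmod 8$ and $m\equiv 10\equiv 2\pmod 8$; in particular $m\equiv 2\pmod 4$, so $\mathcal{O}_{k_m}=\mathbb{Z}[\sqrt{m}]$. Assuming $h(m)=1$ for contradiction, the ideal $\mathfrak{A}$ is principal, say $\mathfrak{A}=(a+b\sqrt{m})$ with $a,b\in\mathbb{Z}$, whence
$$a^2-mb^2=\pm 3.$$

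The key step is to reduce this equation modulo $8$: using $m\equiv 2\pmod 8$, it becomes $a^2-2b^2\equiv\pm 3\pmod 8$. Since the squares modulo $8$ lie in $\{0,1,4\}$ and $2b^2\pmod 8$ lies in $\{0,2\}$, the possible values of $a^2-2b^2\pmod 8$ are $\{0,1,2,4,6,7\}$; neither $3$ nor $-3\equiv 5$ appears, and both signs are killed simultaneously. I expect no serious obstacle: the argument is actually strictly shorter than that of Proposition~\ref{proposition3.1}, since no infinite descent is needed. If the mod-$8$ shortcut were to fail on one sign in a variant, the natural fallback would be the unit $\epsilon_m=s+\sqrt{m}\in\mathcal{O}_{k_m}^{\times}$ with $N_{k_m}(\epsilon_m)=s^2-m=-1$: multiplication by $\epsilon_m$ converts any solution of $a^2-mb^2=-3$ into one of $a^2-mb^2=+3$, reducing the two cases to one.
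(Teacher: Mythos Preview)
Your proposal is correct. The setup (splitting of $3$, ring of integers $\mathbb{Z}[\sqrt{m}]$, reduction to $a^2-mb^2=\pm 3$) matches the paper exactly, but the endgame is genuinely different. The paper eliminates $a^2-mb^2=\pm 3$ by a descent argument: it takes a solution $(a_0,b_0)$ with $b_0>0$ minimal, multiplies $a_0-b_0\sqrt{m}$ by the fundamental unit $\epsilon_m=r+\sqrt{m}$ (with $r=3(2n+1)$, $N(\epsilon_m)=-1$) to produce a new solution, and then uses the minimality of $b_0$ to force inequalities of the type $\pm 2rb_0^2\le 3$, which fail since $r\ge 9$. Your mod~$8$ observation $m\equiv 2\pmod 8$ and the enumeration $a^2-2b^2\pmod 8\in\{0,1,2,4,6,7\}$ kill both signs in one stroke, with no appeal to units or minimal solutions at all. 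This is strictly shorter and more elementary; what the paper's unit-based method buys is uniformity with the arguments for Propositions~\ref{proposition3.1} and~\ref{proposition3.3}, where the specific shape of $m$ does not yield such a clean congruence obstruction and the descent via $\epsilon_m$ is genuinely needed.
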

\begin{proof} Let $m=(3(2n+1))^2+1$. Then 
$$
m\equiv 1\pmod 3,
$$
and thus 
$$
\bigg( \frac{m}{3} \bigg)=1.
$$
Thus we can write 
$$(3)=\mathfrak{B}\mathfrak{B}',\hspace*{.5cm} (\mathfrak{B}\ne \mathfrak{B}'),$$ 
where $\mathfrak{B}$ and $\mathfrak{B}'$ are prime (conjugates) ideals in $\mathcal{O}_{k_m}$ with $N_{k_m}(\mathfrak{B})=3$. 

Let us assume that $h(m)=1$. Then $\mathfrak{B}$ is principal and thus since $m\equiv 2\pmod 4$, $\mathfrak{B}$ can be expressed as 
$$\mathfrak{B}=(a+b\sqrt{m}) \text{ with }a, b\in \mathbb{Z}.$$ 
Therefore, we have
\begin{equation*}
a^2-mb^2=\pm 3.
\end{equation*}
Clearly, $b\ne 0$. Let us assume that $a_0$ be an integer, and let $b_0$ be the least positive integer such that 
\begin{equation}\label{eq3.5}
a_0^2-mb_0^2=\pm 3.
\end{equation}
Then $N_{k_m}(\alpha)=\pm 3$ for some integer $\alpha=a_0-b_0\sqrt{m}$.

Let us suppose $r=3(2n+1)$. Then the fundamental unit $\epsilon_m$ in $k_m$ is given by $$\epsilon_m=r+\sqrt{m}.$$

We now have $N_{k_m}(\alpha \epsilon_m)=\pm 3$ which implies that
\begin{equation*}
(a_0r-b_0m)^2-(a_0-b_0r)^2m=\pm 3.
\end{equation*}
Employing the minimality of $b_0$, 
\begin{equation*}
|a_0-b_0r|\geq b_0.
\end{equation*}
If $a_0-b_0r\geq b_0$, then $a_0\geq b_0(r+1)$ and thus (\ref{eq3.5}) gives that $2rb_0^2\leq \pm 3$. This is not possible as $r\geq 3$.

Again, if $b_0 r-a_0\geq b_0$, then $a_0\leq b_0(r-1)$. Thus from (\ref{eq3.5}) we observe that $b_0^2(r-1)^2-b_0^2m\geq \pm 3$ which implies that $-2rb_0^2\geq \pm 3$. 
This once again leads to an impossibility as $r\geq 3$. Thus we complete the proof.  
\end{proof}

We now use Proposition \ref{proposition3.2} and Lemma \ref{lemma1} to obtain the following:
\begin{theorem}\label{theorem3.2}
Let $m=(3(2n+1))^2+1$ with $n$ a positive integer. Then $\mathcal{H}^+(4m)>1$.
\end{theorem}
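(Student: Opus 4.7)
The plan is to derive Theorem \ref{theorem3.2} as an immediate corollary of Proposition \ref{proposition3.2} and Lemma \ref{lemma1}, in exact parallel with the deduction of Theorem \ref{theorem3.1} from Proposition \ref{proposition3.1} and Lemma \ref{lemma1}. The first task is to check the congruence class of $m$: since $3(2n+1)$ is odd, its square is $\equiv 1 \pmod 8$, so $m = (3(2n+1))^2 + 1 \equiv 2 \pmod 4$. In the notation of Lemma \ref{lemma1}, this forces $\sigma_m = 2$, and hence $\sigma_m^2 m = 4m$, which is precisely the index of the cyclotomic field appearing in the statement.

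The second and final step is simply to combine the two preceding results. Under the (implicit) square-freeness hypothesis on $m$, Proposition \ref{proposition3.2} already supplies $h(m) > 1$, i.e., the ideal class group $\mathcal{C}(k_m)$ of $k_m = \mathbb{Q}(\sqrt{m})$ is non-trivial. Lemma \ref{lemma1} then embeds $\mathcal{C}(k_m)^2$ as a subgroup of the class group of $K^+_{4m}$, so this class group is itself non-trivial and $\mathcal{H}^+(4m) > 1$, as claimed.

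No genuine obstacle remains at this stage; all the substantive work has been absorbed into Proposition \ref{proposition3.2} (whose proof uses a descent against the fundamental unit $\epsilon_m = r + \sqrt{m}$, with $r = 3(2n+1)$, to rule out integer solutions of $a^2 - m b^2 = \pm 3$) and into Osada's theorem underlying Lemma \ref{lemma1}. The one subtlety worth flagging is the square-freeness of $m$, silently omitted from the theorem statement: it is indispensable both for the integral basis $\{1, \sqrt{m}\}$ of $\mathcal{O}_{k_m}$ implicitly used in Proposition \ref{proposition3.2}, and for the validity of Lemma \ref{lemma1}.
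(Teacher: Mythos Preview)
Your proof is correct and follows exactly the paper's approach: Theorem \ref{theorem3.2} is stated there as an immediate consequence of Proposition \ref{proposition3.2} together with Lemma \ref{lemma1}, and your verification that $m\equiv 2\pmod 4$ (so $\sigma_m=2$) together with your remark on the implicit square-freeness hypothesis simply make explicit the two routine checks the paper leaves to the reader.
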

We provide another similar family of maximal real subfields of certain cyclotomic fields each with class number bigger than $1$.
\begin{proposition}\label{proposition3.3}
Let $m=\{6(2n+1)\}^2-2$ with $n\geq 1$ an integer. Then $h(m)>1$. 
\end{proposition}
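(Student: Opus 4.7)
The plan is to mimic the Diophantine descent of Proposition~\ref{proposition3.2}, adjusted for the fact that $r + \sqrt m$ is no longer a unit. Set $r = 6(2n+1)$, so $m = r^2 - 2$ and $r \geq 18$. Since $r \equiv 0 \pmod 3$, we have $m \equiv 1 \pmod 3$, hence $\left(\frac{m}{3}\right) = 1$ and $3$ splits in $k_m = \mathbb{Q}(\sqrt m)$ as $(3) = \mathfrak{B}\mathfrak{B}'$ with $N_{k_m}(\mathfrak{B}) = 3$. Assuming for contradiction that $h(m) = 1$, and using $m \equiv 2 \pmod 4$ so that $\mathcal{O}_{k_m} = \mathbb{Z}[\sqrt m]$, the ideal $\mathfrak{B}$ is principal and yields integers $a, b$ with $a^2 - m b^2 = \pm 3$.

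A key preliminary is that $N_{k_m}(r + \sqrt m) = r^2 - m = 2$, so $r + \sqrt m$ is not a unit; squaring and dividing by $2$ produces
$$
\epsilon_m = (r^2-1) + r\sqrt m,
$$
which is a unit because $(r^2-1)^2 - r^2(r^2-2) = 1$. Let $b_0$ be the least positive integer for which $a_0^2 - m b_0^2 = \pm 3$ is solvable with a positive integer $a_0$, set $\alpha = a_0 - b_0 \sqrt m$, and compute
$$
\alpha \epsilon_m = \bigl[a_0(r^2-1) - b_0 r m\bigr] + \bigl[a_0 r - b_0(r^2-1)\bigr]\sqrt m,
$$
an element of norm $\pm 3$. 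By minimality of $b_0$, the new $\sqrt m$-coefficient $B := a_0 r - b_0(r^2-1)$ satisfies $|B| \geq b_0$ or $B = 0$; the second alternative is immediately ruled out since it would give $A^2 = \pm 3$ for an integer $A$.

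The case $B \geq b_0$ gives $a_0 \geq b_0 r$, hence $a_0^2 - m b_0^2 \geq 2 b_0^2$, which forces the sign $+$ and $b_0 = 1$; then $a_0^2 = r^2 + 1$, but $(a_0 - r)(a_0 + r) = 1$ has no integer solution for $r \geq 18$. The case $B \leq -b_0$ gives $a_0 \leq b_0 m / r$, hence $a_0^2 - m b_0^2 \leq -2 m b_0^2 / r^2$, which forces the sign $-$ and $b_0^2 \leq 3 r^2 / (2 m) < 2$, so again $b_0 = 1$ and $a_0^2 = r^2 - 5$; factoring $(r - a_0)(r + a_0) = 5$ forces $r = 3$, contradicting $r \geq 18$.

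The main obstacle I anticipate is finding the right unit, because the natural guess $r + \sqrt m$ has norm $2$ and fails to preserve the norm under multiplication; the repair is to use $\epsilon_m = (r^2 - 1) + r\sqrt m$. Once that is in place, the two-case minimality argument together with the elementary analysis of $b_0 = 1$ parallels Proposition~\ref{proposition3.2} almost verbatim.
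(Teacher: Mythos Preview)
Your proof is correct and follows essentially the same descent as the paper: split $3$ in $k_m$, assume principality to obtain $a^2 - mb^2 = \pm 3$, multiply by the unit $\epsilon_m = (r^2-1) + r\sqrt{m}$, and use minimality of $b_0$ to force $b_0 = 1$ and reach a contradiction. The one noteworthy difference is that the paper disposes of the $+3$ case separately, via the Jacobi symbol $\left(\frac{3}{18t^2-1}\right) = -1$ (quadratic reciprocity), and only runs the unit/minimality argument on the $-3$ equation; you instead handle both signs uniformly inside the minimality step, which is a mild streamlining. For the $b_0 = 1$ endgame the paper invokes $a_0^2 \equiv 3 \pmod 4$ (using $m \equiv 2 \pmod 4$) rather than your factorisations $a_0^2 - r^2 = 1$ and $r^2 - a_0^2 = 5$, but either device works.
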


\begin{proof}
We observe that
$$m=\{6(2n+1)\}^2-2\equiv 1\pmod 3.$$
Therefore $\big( \frac{m}{3} \big)=\big( \frac{1}{3} \big)=1$, and thus we have $$(3)=\mathfrak{C}\mathfrak{C}'$$
with $\mathfrak{C}\ne \mathfrak{C}'$, where $\mathfrak{C}$ and $\mathfrak{C}'$ are prime ideals in $\mathcal{O}_{k_m}$ with absolute norm $N_{k_m}(\mathfrak{C})=3$. 

Now if $h(m)=1$, then $\mathfrak{C}$ is principal and thus, since $m\equiv 2\pmod 4$, we can write 
$$
\mathfrak{C}=(a+b\sqrt{m}),
$$
where $a, b\in \mathbb{Z}$. 
Therefore, we have 
\begin{equation*}
a^2-mb^2=\pm 3.
\end{equation*}
We first look at the following equation
\begin{equation}\label{eq3.6}
a^2-mb^2=3.
\end{equation}
Let us assume $t=2n+1$. Then (\ref{eq3.6}) can be written as
\begin{equation}\label{eq3.7}
a^2\equiv 3\pmod {18t^2-1}.
\end{equation}
However, by quadratic reciprocity law, we see that
$$
\bigg( \frac{3}{18t^2-1}\bigg)=(-1)^\frac{18t^2-2}{2}\bigg(-\frac{1}{3}\bigg)=-1.
$$
This contradicts to (\ref{eq3.7}). 

We now look at the following:
\begin{equation}\label{eq3.8}
a^2-mb^2=-3.
\end{equation}
Clearly, $b\ne 0$. Let us suppose that (\ref{eq3.8}) has a solution in integers and without loss of generality, let us assume $(a_0, b_0)$ be an integer solution, with $b_0>0$ the least one. Then 
\begin{equation}\label{eq3.9}
a_0^2 - mb_0^2 = -3.
\end{equation}
In the norm form (\ref{eq3.9}) can be written as $N_{k_m}(\alpha)=-3$ with $\alpha=a_0-b_0\sqrt{m}$.
Let $l=6(2n+1)$. Then the fundamental unit $\epsilon_m$ in $k_m$ is given by 
$$
\epsilon_m=(l^2-1)+l\sqrt{m}.
$$
Thus $N_{k_m}(\alpha\epsilon_m)=- 3$ and this implies that
\begin{equation*}
(a_0(l^2-1)-b_0lm)^2-(b_0(l^2-1)-a_0l)^2m=\pm 3.
\end{equation*}
By the minimality of $b_0$, we obtain
\begin{equation*}
|b_0(l^2-1)-a_0l|\geq b_0.
\end{equation*}
If $-b_0(l^2-1)+a_0l\geq b_0$, then $a_0\geq b_0l$ and thus (\ref{eq3.9}) implies that $(l^2-m)b_0^2<-3$. This  is not possible. 

Finally, $b_0(l^2-1)-a_0l\geq b_0$ implies $b_0m > a_0l$ and hence (\ref{eq3.9}) gives $2mb_0^2<3l^2$ which implies $b_0=1$. Thus (\ref{eq3.9}) implies $a_0^2\equiv 3\pmod 4$ which is not true. This completes the proof.
\end{proof}
Applying Proposition \ref{proposition3.3} and Lemma \ref{lemma1}, we obtain the following result.
\begin{theorem}\label{theorem3.3}
Let $m=\{6(2n+1)\}^2-2$ with $n$ a positive integer. Then $\mathcal{H}^+(4m)>1$.
\end{theorem}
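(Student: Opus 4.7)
The plan is to combine Proposition \ref{proposition3.3} with Lemma \ref{lemma1}, following the same template that yielded Theorems \ref{theorem3.1} and \ref{theorem3.2}. The whole content of the result lies in Proposition \ref{proposition3.3}; Lemma \ref{lemma1} is the bridge that lifts a non-trivial class of $k_m$ into a non-trivial class of $K^+_{4m}$.

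First I would pin down the correct value of Osada's constant $\sigma_m$. Since $6(2n+1)$ is even, $\{6(2n+1)\}^2$ is divisible by $4$, so
\[
m = \{6(2n+1)\}^2 - 2 \equiv -2 \equiv 2 \pmod 4.
\]
By the definition of $\sigma_m$ in Lemma \ref{lemma1}, this forces $\sigma_m = 2$, hence $\sigma_m^2 m = 4m$. Thus Lemma \ref{lemma1} applies directly to the field $K^+_{4m}$ and exhibits a subgroup of its ideal class group isomorphic to $\mathcal{C}(k_m)^2$.

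Next I would invoke Proposition \ref{proposition3.3}, under the standing assumption that $m$ is square-free, to conclude that $h(m) = |\mathcal{C}(k_m)| > 1$. Consequently $\mathcal{C}(k_m)^2$ is a non-trivial group of order $h(m)^2 > 1$. Since $\mathcal{H}^+(4m)$ is the order of the class group of $K^+_{4m}$, which contains a subgroup of this order, we obtain $\mathcal{H}^+(4m) \geq h(m)^2 > 1$, as required.

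There is no real obstacle left at this stage: all the Diophantine work needed to rule out principal prime ideals above $3$ in $k_m$ was carried out in Proposition \ref{proposition3.3}, and the structural input from Osada's theorem is quoted as Lemma \ref{lemma1}. The only small point to take care of is the congruence check $m \equiv 2 \pmod 4$, which selects the right branch of the definition of $\sigma_m$ and guarantees that the conductor in Osada's theorem matches the $4m$ appearing in the statement.
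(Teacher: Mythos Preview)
Your proposal is correct and follows exactly the paper's approach: the paper's proof of Theorem \ref{theorem3.3} consists of the single sentence ``Applying Proposition \ref{proposition3.3} and Lemma \ref{lemma1}, we obtain the following result.'' You have merely made explicit the congruence check $m\equiv 2\pmod 4$ that picks out $\sigma_m=2$, and the square-free hypothesis needed to invoke Lemma \ref{lemma1}, both of which the paper leaves implicit.
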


\section{Divisibility of $\mathcal{H}^+(m)$}
In this section, we prove a result concerning the divisibility of the plus part $\mathcal{H}^+(m)$ of the class numbers of certain cyclotomic fields. We first fix some notations. For a number field $K$, we denote the discriminant, the norm map and trace map of $K$ over $\mathbb{Q}$ by $D_K$, $N_{K}$ and $T_{K}$, respectively. For an integer $n$ and a prime $p$, by $\mathit{v}_p (n)$ we mean the greatest exponent $\mu$ of $p$ such that $p^\mu \mid n$.

Let us assume that $\alpha$ is an algebraic integer in $K$ such that $N_{K}(\alpha)$ is a cube in $\mathbb{Z}$. For such an $\alpha$, define the cubic polynomial $f_\alpha (X)$ by 
\begin{equation*}
 f_\alpha(X):=X^3-3(N_{K} (\alpha))^{1/3}X-T_{K}(\alpha).
\end{equation*}
Lemma 2.1 in \cite{HC-17} (or Lemma 2.2 in \cite{HC19}) and Proposition 6.5 in \cite{KI00} together give the following proposition which is one of the main ingredient in the proof of the next theorem. 
\begin{proposition}\label{proposition4.1}
Let $d=-3d'$ for some square-free integer $d' (\ne 1, - 3)$. Let $\alpha$ be an algebraic integer in $K'=\mathbb{Q}(\sqrt{d'})$ whose norm is a cube in $\mathbb{Z}$. Then the polynomial $f_\alpha(X)$ is irreducible over $\mathbb{Q}$ if and only if $\alpha$ is a not cube in $K'$. Moreover, if $f_{\alpha}(X)$ is irreducible over $\mathbb{Q}$, then the splitting field of $f_\alpha(X)$ over $\mathbb{Q}$ is an $S_3$-field containing $K=\mathbb{Q}(\sqrt{d})$ which is a cyclic cubic extension of $K$ unramified outside $3$ and contains a cubic subfield $L$ with $\mathit{v}_3(D_L)\ne 5$. 
\end{proposition}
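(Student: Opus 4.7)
The plan is to deduce the statement by carefully combining the two cited inputs: Lemma 2.1 of \cite{HC-17} (equivalently Lemma 2.2 of \cite{HC19}), which handles the irreducibility equivalence, and Proposition 6.5 of \cite{KI00}, which supplies the Galois-theoretic and ramification data. Since both ingredients are already in the literature, the work of the proof is a matter of checking that the hypotheses line up and that the discriminant calculation identifies the quadratic resolvent correctly.

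First I would treat the irreducibility claim. The cubic $f_\alpha(X)=X^3-3(N_{K'}(\alpha))^{1/3}X-T_{K'}(\alpha)$ has coefficients in $\mathbb{Z}$ because $N_{K'}(\alpha)$ is assumed to be a rational cube and $T_{K'}(\alpha)\in\mathbb{Z}$. The forward direction of the equivalence is immediate: if $\alpha=\gamma^{3}$ for some $\gamma\in K'$, then $\gamma+\bar{\gamma}\in\mathbb{Q}$ is a root of $f_\alpha$, so the polynomial is reducible. The reverse direction is the content of the cited Hoque–Chakraborty lemma and uses unique factorization of ideals in $\mathcal{O}_{K'}$ together with the specific shape of $f_\alpha$ (a depressed cubic whose constant term equals the trace) to force a rational root to come from a cube factorization of $\alpha$ itself. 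I would simply invoke this lemma.

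Second, I would identify the quadratic resolvent of $f_\alpha$. A direct expansion gives
\begin{equation*}
\operatorname{disc}(f_\alpha)=4\cdot 27\,(N_{K'}(\alpha))^{2/3}-27\,(T_{K'}(\alpha))^{2}
=-27\bigl(T_{K'}(\alpha)^{2}-4N_{K'}(\alpha)^{2/3}\cdot (N_{K'}(\alpha))^{1/3}\bigr),
\end{equation*}
and, after absorbing the cube factor $N_{K'}(\alpha)$ (which is a square times a cube), this equals $-27\,d'$ up to rational squares, i.e.\ $9d$ modulo $(\mathbb{Q}^{\times})^{2}$. Hence the quadratic subfield of the splitting field $F$ of $f_\alpha$ is $\mathbb{Q}(\sqrt{d})=K$, and $\operatorname{Gal}(F/\mathbb{Q})\cong S_{3}$ with $F/K$ cyclic of degree $3$.

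Finally, for the ramification assertion and the constraint $v_3(D_L)\ne 5$ on the cubic subfield $L=\mathbb{Q}(\theta)$, with $\theta$ any root of $f_\alpha$, I would invoke Proposition 6.5 of \cite{KI00}. Kishi's result is precisely the classification of cyclic cubic extensions of an imaginary quadratic field that arise from norm-cube elements and describes their ramification outside $3$ together with the permissible $3$-adic valuations of the discriminant of the cubic resolvent. The main technical obstacle, which is what the Kishi input bypasses, is the careful local analysis at the prime $3$: controlling the conductor and ruling out the value $v_3(D_L)=5$ requires a genuine computation in the completion $\mathbb{Q}_3$ distinguishing the various ramification types of $L/\mathbb{Q}$ at $3$. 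With Kishi's proposition on hand, the statement follows by assembling the irreducibility criterion, the discriminant identification, and his structural conclusion.
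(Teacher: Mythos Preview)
Your approach matches the paper's exactly: the paper itself offers no argument beyond the sentence that Lemma~2.1 of \cite{HC-17} (equivalently Lemma~2.2 of \cite{HC19}) together with Proposition~6.5 of \cite{KI00} ``give'' the proposition, and you are simply unpacking that citation. The overall logic---irreducibility equivalence from the Hoque--Chakraborty lemma, identification of the quadratic resolvent via the discriminant, and the ramification/\,$v_3(D_L)\ne 5$ conclusion from Kishi---is the intended one.

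Two small corrections are in order. First, your displayed discriminant has a slip: for $f_\alpha(X)=X^3+pX+q$ with $p=-3N^{1/3}$ one gets $-4p^3=108\,N$, not $108\,N^{2/3}$; the second equality then reads $-27\bigl(T_{K'}(\alpha)^2-4N_{K'}(\alpha)\bigr)$, and since $T_{K'}(\alpha)^2-4N_{K'}(\alpha)=(\alpha-\bar\alpha)^2$ lies in the square class of $d'$, the discriminant is $-27d'=9d$ up to squares, as you conclude. Second, your description of Kishi's Proposition~6.5 as treating ``an imaginary quadratic field'' is too narrow: in the present setting $d=-3d'$ can have either sign (and in the paper's application $K=\mathbb{Q}(\sqrt{d})$ is real, while $K'$ is imaginary), so you should phrase the invocation of \cite{KI00} without that restriction.
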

We extract the following result from Theorem 1 in \cite{LN83} which talks about ramification at $p=3$.
\begin{proposition}\label{proposition4.2}
Let us suppose that
$$
g(X):= X^3-aX-b\in \mathbb{Z}[X]
$$
is irreducible over $\mathbb{Q}$ and that either $\mathit{v}_3(a)<2$ or $\mathit{v}_3(b)<3$ holds. Set $K:=\mathbb{Q}(\theta)$ for a root $\theta $ of $g(X)$. 
Then $3$ is totally ramified
 in $K/\mathbb{Q}$ if and only if one of the following conditions holds:
 \begin{enumerate}
  \item[(LN-1)] $1\leq \mathit{v}_3(b)\leq \mathit{v}_3(a)$;
  \item[(LN-2)] $3\mid a, \ a\not\equiv3(mod\ 9), \ 3\nmid b \ and \ b^2\not\equiv a+1 (mod\ 9)$;
  \item[(LN-3)] $ a \equiv3(mod\ 9), \ 3\nmid b \ and \ b^2\not\equiv a+1 (mod\ 27)$.
 \end{enumerate}
 \end{proposition}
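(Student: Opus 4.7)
Since Proposition 4.2 is stated as being extracted directly from Theorem 1 of Llorente--Nart, the intended ``proof'' in this paper is essentially a reference to their result. Nevertheless, one can sketch the strategy required to derive it. Let $\theta$ be a root of $g(X) = X^3 - aX - b$ and set $K = \mathbb{Q}(\theta)$. The key tools are: (i) Dedekind's theorem, which lets us read the decomposition of $3\mathcal{O}_K$ off the factorization of $g(X) \bmod 3$ whenever $3 \nmid [\mathcal{O}_K : \mathbb{Z}[\theta]]$, (ii) Dedekind's criterion itself, which tests precisely whether $3$ divides this index, and (iii) the Ore/Newton-polygon method at $p=3$ when $3$ does divide the index. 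Throughout we use the identity $\Delta(g) = 4a^3 - 27b^2 = [\mathcal{O}_K:\mathbb{Z}[\theta]]^2 D_K$ to track the valuation $v_3(D_K)$.

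The plan is to split on the $3$-adic valuations of $a$ and $b$. Case (LN-1), $1 \le v_3(b) \le v_3(a)$, forces the Newton polygon of $g$ at $3$ to consist of a single segment from $(0, v_3(b))$ to $(3, 0)$; when its slope has denominator $3$ in lowest terms the polygon is Eisenstein-like and Ore's theorem delivers total ramification directly, while the remaining sub-case is resolved by the associated residual polynomial. Cases (LN-2) and (LN-3) correspond to the Dedekind-applicable regime: here $g(X) \equiv X^3 \pmod 3$ so that modulo $3$ the polynomial is a perfect cube, and the extra congruences $b^2 \not\equiv a+1 \pmod 9$ (resp.\ $\pmod{27}$) are exactly the Dedekind-criterion conditions guaranteeing that $3$ does \emph{not} divide the index, so the factorization $\bar{g}(X) = \bar{X}^3$ lifts to $3\mathcal{O}_K = \mathfrak{p}^3$. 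In the converse direction, one checks by exhausting $v_3(a), v_3(b)$ that no other combination is compatible with total ramification (all remaining cases yield either splitting, inertness, or partial ramification $\mathfrak{p}^2 \mathfrak{q}$).

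The main obstacle is the boundary between the two regimes: when $3$ divides $[\mathcal{O}_K:\mathbb{Z}[\theta]]$, Dedekind's theorem loses its grip and one cannot simply read the factorization from $g \bmod 3$. One has to pass to the $3$-adic completion, compute the Newton polygon of $g$, and, when that polygon has integral vertices, analyse the residual polynomial to decide whether the unique $3$-adic factor defines a totally ramified, unramified, or partially ramified extension. Carrying out this case analysis cleanly and showing it lines up exactly with the arithmetically stated conditions (LN-1)--(LN-3) is the delicate part, and it is precisely what the Llorente--Nart theorem packages for us.
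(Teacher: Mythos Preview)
The paper gives no proof of this proposition at all: it is simply quoted from Theorem~1 of Llorente--Nart \cite{LN83}, as you correctly note in your opening sentence. Your sketch therefore goes well beyond what the paper does, and the overall strategy you describe (Dedekind's criterion when $3\nmid[\mathcal{O}_K:\mathbb{Z}[\theta]]$, Newton polygon/Ore analysis otherwise, with the discriminant relation $\Delta(g)=4a^3-27b^2=[\mathcal{O}_K:\mathbb{Z}[\theta]]^2 D_K$ controlling $v_3(D_K)$) is indeed the route Llorente--Nart take.

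Two small inaccuracies in the sketch, neither fatal. First, in (LN-2)/(LN-3) one has $3\nmid b$, so $g(X)\equiv X^3-b\equiv (X-b)^3\pmod 3$ rather than $X^3$; your ``perfect cube'' conclusion is still correct. Second, the congruences $b^2\not\equiv a+1\pmod{9}$ (resp.\ $27$) are not literally the Dedekind test for $3\nmid[\mathcal{O}_K:\mathbb{Z}[\theta]]$; in Llorente--Nart they arise from computing $v_3(D_K)$ directly in each sub-case and isolating when it equals the values ($3,4,5$ for wild, $2$ for tame) that force total ramification. The distinction matters because total ramification can occur even when $3$ divides the index. But as an outline of why such congruences enter, your account is serviceable.
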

Now we can proceed to our next result.
\begin{theorem}\label{theorem4.1}
For a positive integer $n$ satisfying $n\equiv 0 \pmod 3$, the class number of $K=\mathbb{Q}(\sqrt{3(4\times 3^n-1)})$ is divisible by $3$. In fact, there are infinitely many such real quadratic fields with class number divisible by $3$. 
\end{theorem}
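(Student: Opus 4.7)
The plan is to construct an everywhere-unramified cyclic cubic extension $M/K$ and invoke class field theory. Following the setup of Propositions~\ref{proposition4.1} and~\ref{proposition4.2}, write $d = -3d'$ with $d' = 1 - 4\cdot 3^n$, so that $K' = \mathbb{Q}(\sqrt{d'})$ is the associated imaginary quadratic field. Since $d' \equiv 1 \pmod 4$, the element
$$
\alpha = \frac{1 + \sqrt{d'}}{2}
$$
lies in $\mathcal{O}_{K'}$ and has trace $T_{K'}(\alpha)=1$ and norm $N_{K'}(\alpha) = (1-d')/4 = 3^n$, which is a cube in $\mathbb{Z}$ precisely because $n\equiv 0\pmod 3$. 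The cubic of Proposition~\ref{proposition4.1} is then
$$
f_\alpha(X) = X^3 - 3^{n/3+1}X - 1,
$$
and since $f_\alpha(\pm 1)\ne 0$ for $n\ge 3$, the rational root test shows that $f_\alpha$ is irreducible over $\mathbb{Q}$.

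Proposition~\ref{proposition4.1} now yields that $\alpha$ is not a cube in $K'$, and the splitting field $M$ of $f_\alpha$ is an $S_3$-extension of $\mathbb{Q}$ whose quadratic subfield is $\mathbb{Q}(\sqrt{\operatorname{disc} f_\alpha}) = \mathbb{Q}(\sqrt{27(4\cdot 3^n - 1)}) = K$; moreover $M/K$ is cyclic cubic and unramified outside $3$, and the cubic subfield $L=\mathbb{Q}(\theta)$, where $\theta$ is a root of $f_\alpha$, satisfies $v_3(D_L)\ne 5$. It remains to show $M/K$ is unramified at the unique prime $\mathfrak{P}$ of $K$ above $3$, where $(3)=\mathfrak{P}^2$ since $3\parallel d$.

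Apply Proposition~\ref{proposition4.2} with $a = 3^{n/3+1}$ and $b = 1$. The hypothesis $v_3(b)<3$ holds, and for $n\ge 3$ one checks: (LN-1) fails because $v_3(b)=0$; (LN-2) fails because $a\equiv 0\pmod 9$ and $b^2=1\equiv a+1\pmod 9$; and (LN-3) fails because $a\not\equiv 3\pmod 9$. Hence $3$ is not totally ramified in $L/\mathbb{Q}$, so the ramification index $e_L$ of any prime of $L$ over $3$ satisfies $e_L\le 2$. For any prime $\mathfrak{P}_M$ of $M$ above $3$, the two factorization paths in the $S_3$-tower $\mathbb{Q}\subset K,L\subset M$ give
$$
e(\mathfrak{P}_M\mid 3) = e_{M/K}\cdot e_K = 2\,e_{M/K} \qquad\text{and}\qquad e(\mathfrak{P}_M\mid 3) = e_{M/L}\cdot e_L \le 2\cdot 2 = 4,
$$
which rules out $e_{M/K}=3$ and forces $e_{M/K}=1$. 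So $M/K$ is unramified at $\mathfrak{P}$ as well.

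Combined with Proposition~\ref{proposition4.1}, $M/K$ is an everywhere unramified cyclic cubic extension, hence $M$ lies in the Hilbert class field of $K$, and class field theory supplies an injection $\operatorname{Gal}(M/K)\cong \mathbb{Z}/3\mathbb{Z}\hookrightarrow \operatorname{Cl}(K)$, yielding $3\mid h(K)$. Infinitely many distinct real quadratic fields arise in this family because $d_n = 3(4\cdot 3^n-1)$ is unbounded while only finitely many $n$ can share any fixed squarefree kernel. The main obstacle is the ramification bookkeeping at the wild prime $3$: Proposition~\ref{proposition4.1} handles everything outside $3$, but at $3$ both the refined input $v_3(D_L)\ne 5$ and the explicit exclusion of total ramification furnished by Proposition~\ref{proposition4.2} are indispensable.
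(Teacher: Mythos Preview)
Your proof is correct and follows essentially the same strategy as the paper: construct the Kishi cubic $f_\alpha(X)=X^3-3^{(n+3)/3}X-1$ attached to $\alpha=\tfrac{1+\sqrt{d'}}{2}$, invoke Proposition~\ref{proposition4.1} for unramifiedness outside $3$, and use Proposition~\ref{proposition4.2} to handle the prime $3$. The only differences are expository. You prove irreducibility via the rational root test, whereas the paper reduces modulo $2$ to get $X^3-X-1$; both are valid. More usefully, you spell out the ramification-index bookkeeping at $3$ (comparing the two paths $\mathbb{Q}\subset K\subset M$ and $\mathbb{Q}\subset L\subset M$) that the paper leaves implicit when it jumps from ``(LN-1)--(LN-3) fail'' directly to ``$M/K$ is unramified at $3$'', and you also supply an argument for the infinitude claim, which the paper states but does not justify in the proof.
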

\begin{proof}
Let $d=3(4\times 3^n-1)$ and $d'=1-4.3^n$. Also let $K'=\mathbb{Q}(\sqrt{d'})$. Suppose $\alpha \in K'$ is defined by 
 $$
 \alpha= \frac{1+\sqrt{d'}}{2}.
 $$
 Then $T_{k'}(\alpha)=1$ and $N_{K'}(\alpha)=3^{n/3}$. 
 
The cubic polynomial corresponding to $\alpha$ is
\begin{align*}
 f_{\alpha}(X)& =X^3-3(N_{K'}(\alpha))^{1/3}X-T_{K'}(\alpha)\\
  & = X^3-3^{(n+3)/3}X-1\\
  & \equiv X^3 - X-1 \pmod 2.
\end{align*}
Thus the polynomial $f_{\alpha} (X)$ is irreducible over $\mathbb{Z}_2$ and hence it is irreducible over $\mathbb{Q}$ too. Therefore by Proposition \ref{proposition4.1}, the splitting field of $f_{\alpha} (X)$ over $\mathbb{Q}$ is a cyclic cubic extension of $K$ which is unramified outside $3$.

We now claim that the splitting field of $f_{\alpha} (X)$ is unramified over $K$ at $3$ too. One can easily see that the polynomial $f_{\alpha} (X)$ does not satisfy the conditions (LN-1), (LN-2) and (LN-3). Therefore, by Proposition \ref{proposition4.2}, we prove the claim. Finally, by Hilbert class field theory the class number of $K$ is divisible by $3$.
\end{proof}
The following result is an immediate implication of Theorem \ref{theorem4.1} and Lemma \ref{lemma1}.
\begin{theorem}\label{theorem4.2}
Let $m=3(4\times 3^n-1)$ be square-free, where $n$ is a positive integer satisfying $n\equiv 0 \pmod 3$. Then $3\mid \mathcal{H}^+(m)$.
\end{theorem}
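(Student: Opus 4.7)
The plan is to deduce Theorem \ref{theorem4.2} as a direct combination of Theorem \ref{theorem4.1} with Lemma \ref{lemma1}. First I would compute the residue class of $m=3(4\times 3^{n}-1)$ modulo $4$. Since $4\times 3^n\equiv 0\pmod 4$, one has $m\equiv -3\equiv 1\pmod 4$, so the constant $\sigma_{m}$ appearing in Lemma \ref{lemma1} equals $1$, and therefore $\sigma_{m}^{2}m=m$. Under the square-free hypothesis on $m$ assumed in the statement, Lemma \ref{lemma1} then produces a subgroup of the ideal class group of $K^{+}_{m}$ isomorphic to $\mathcal{C}(k_{m})^{2}$.

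Next I would apply Theorem \ref{theorem4.1} with $K=\mathbb{Q}(\sqrt{m})=k_{m}$, which yields $3\mid h(m)$, that is, $3$ divides the order of $\mathcal{C}(k_{m})$. Because $3$ is odd, the squaring map is an automorphism on the $3$-primary component of the finite abelian group $\mathcal{C}(k_{m})$; consequently $3$ still divides the order of $\mathcal{C}(k_{m})^{2}$ regardless of whether that symbol is read as the subgroup of squares or as the direct product $\mathcal{C}(k_{m})\times\mathcal{C}(k_{m})$. Transporting this $3$-torsion across the inclusion supplied by Lemma \ref{lemma1} gives $3\mid \mathcal{H}^{+}(m)$, which is the desired divisibility.

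There is essentially no analytical obstacle to the argument: the proof reduces to the mod-$4$ congruence check together with a short observation about odd-order subgroups. The only point requiring care is ensuring that Lemma \ref{lemma1} is being applied at the correct conductor, namely to $K^{+}_{m}$ rather than $K^{+}_{4m}$, and it is precisely the identity $\sigma_{m}=1$ that guarantees this. This is the feature that makes the conclusion of Theorem \ref{theorem4.2} sharper in form than those of Theorems \ref{theorem3.1}, \ref{theorem3.2} and \ref{theorem3.3}, where $m\equiv 2\pmod 4$ forced $\sigma_{m}=2$ and thus only gave information about $K^{+}_{4m}$.
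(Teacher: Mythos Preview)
Your proposal is correct and follows exactly the route indicated in the paper, which simply records Theorem~\ref{theorem4.2} as an immediate consequence of Theorem~\ref{theorem4.1} together with Lemma~\ref{lemma1}. You have merely made explicit the two points the paper leaves to the reader, namely the congruence $m\equiv 1\pmod 4$ forcing $\sigma_m=1$, and the survival of the $3$-divisibility under passage to $\mathcal{C}(k_m)^2$.
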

\section{Numerical examples}
In this section, we provide some numerical examples corroborating  our results in \S2 and in \S3. It is sufficient to compute the class numbers of each of the families of underlying real quadratic fields, i.e. $h(m)$'s. We compute these class numbers for  small values of $m$ and list them in the tables below.
\begin{table}[ht]
 \centering
\begin{tabular}{  c  c  c  c c c c c} 
 \hline
 $n$ & $m=\{14(2n+1)\}^2+2$ & $h(m)$ & $n$ & $m=\{14(2n+1)\}^2+2$ & $h(m)$\\
\hline
1&1766&5&2&4902&8\\
\hline
3&9606&18&4&15878&8\\
\hline
5&23718&12&6&33126&28\\
\hline
7&44102&15&8&56646&10\\
\hline
9&70758&5&10&86438&18\\
\hline
11&103686&32&12&122502&28\\
\hline
13&142886&33&14&164838&28\\
\hline
15&188358&44&16&213446&36\\
\hline
17&240102&7&18&268326&20\\
\hline
19&298118&21&20&329478&44\\
\hline
21&362406&4&22&396902&36\\
\hline
23&432966&66&24&470598&44\\
\hline
25&509798&55&26&550566&44\\
\hline
27&592902&21&28&636806&75\\
\hline
29&682278&68&30&729318&58\\
\hline
31&777926&57&32&828102&60\\
\hline
\end{tabular}\vspace{3mm}
\caption{Numerical examples of Theorem \ref{theorem3.1}. }
\end{table}
\begin{table}
 \centering
\begin{tabular}{  c  c  c  c c c c c } 
 \hline
 $n$ & $m=\{3(2n+1)\}^2+1$ & $h(m)$ & $n$ & $m=\{3(2n+1)\}^2+1$ & $h(m)$\\
\hline
1&82&4&2&226&8\\
\hline
3&442&8&4&730&12\\
\hline
5&1090&12&6&1522&12\\
\hline
7&2026&14&8&2602&10\\
\hline
9&3250&4&10&3970&20\\
\hline
11&4762&22&12&5626&28\\
\hline
13&6502&16&14&7570&20\\
\hline
15&8650&6&16&9802&2\\
\hline
17&11026&44&18&12322&20\\
\hline
19&13690&2&20&15130&32\\
\hline
21&16642&28&22&18226&36\\
\hline
23&19882&34&24&21610&48\\
\hline
25&23410&52&26&25282&32\\
\hline
27&27226&58&28&29242&38\\
\hline
29&31330&40&30&33490&48\\
\hline
\end{tabular}\vspace{2mm}
\caption{Numerical examples of Theorem \ref{theorem3.2}. }
\end{table}


\begin{table}
 \centering
\begin{tabular}{  c  c  c  c c  c c c } 
 \hline
 $n$ & $m=\{6(2n+1)\}^2-2$ & $h(m)$ & $n$ & $m=\{6(2n+1)\}^2-2$ & $h(m)$\\
\hline
1& 322&4&2&898&6\\
\hline
 3&1762&4&4&2914&12\\
 \hline
5&4354&16&6&6082&6\\
\hline
7&8098&12&8&10402&12\\
\hline
9&12994&24&10&15874&26\\
\hline
11&19042&24&12&22498&16\\
\hline
13&26242&12&14&30274&26\\
\hline
15&34594&4&16&39202&28\\
\hline
17&44098&16&18&49282&28\\
\hline
19&54754&24&20&60514&40\\
\hline
21&66562&24&22&72898&24\\
\hline
23&79522&48&24&86434&36\\
\hline
25&93634&56&26&101122&32\\
\hline
27&108898&40&28&116962&34\\
\hline
29&125314&44&30&133954&54\\
\hline
31&142882&40&32&152098&44\\
\hline
33&161602&8&34&171394&2\\
\hline
35&181474&60&36&191842&40\\
\hline
37&202498&52&38&213442&42\\
\hline
39&224674&60&40&236194&72\\
\hline
\end{tabular}\vspace{2mm}
\caption{Numerical examples of Theorem \ref{theorem3.3}. }
\end{table}
\begin{table}
 \centering
\begin{tabular}{  c  c  c  c c c c c} 
 \hline
 $n$ & $m=3(4\times3^n-1)$ & $h(m)$ & $n$ & $m=3(4\times3^n-1)$ & $h(m)$\\
\hline
3&321&3&6&8745&12\\
\hline
9&236193&36&12&6377289&36\\
\hline
15&172186881&837&18&4649045865&36\\
\hline
21&125524238433&11232&24&3389154437769&36\\
\hline
\end{tabular}\vspace{2mm}
\caption{Numerical examples of Theorem \ref{theorem4.1}. }
\end{table}
\section*{Acknowledgements}
A. Hoque acknowledges SERB MATRICS grant (No. MTR/2021/00762), Govt. of India.
\newpage

\end{document}